\numberwithin{equation}{section}
\newtheorem{theorem}{Theorem}[section]
\begin{document}
\author{Alexander E Patkowski}
\title{A partition identity connected to the second rank moment}

\maketitle
\begin{abstract} The purpose of this note is to offer some partition implications of a $q$-series that is connected to the second Atkin-Garvan moment. Inequalities and relationships among the number of divisors and partitions are provided as consequences. \end{abstract}

% AMS keywords (used in AMS journals)
\keywords{\it Keywords: \rm Bailey pairs; Partitions; $q$-series.}

% AMS subject classifications (used in AMS journals)
\subjclass{ \it 2010 Mathematics Subject Classification 11P84, 11P81}

\section{Introduction and main theorems}

We use the familiar basic-hypergeometric notation [6] $(a)_n=(a;q)_{n}:=\prod_{0\le k\le n-1}(1-aq^{k})$ to display our $q$-series in compact form. It is now well-known in the literature [2, 3, 10] that
\begin{equation}\sum_{n\ge0}\left(\frac{1}{(q)_{\infty}}-\frac{1}{(q)_n}\right)=\frac{1}{(q)_{\infty}}\sum_{n\ge1}\frac{q^{n}}{1-q^{n}},\label{eq:(1.1)}\end{equation}
which is sometimes noted as a simple example of a``sum of tails" $q$-series. It also has been noted in [10] as equivalent to the result found in [12], that $d(n)$ is equal to the sum of smallest parts of partitions of $n$ into an odd number of distinct parts minus those with an even number of distinct parts. Here we put $\sigma_i(n)=\sum_{d|n}d^{i},$ $d(n)=\sigma_{0}(n),$ and $p(n)$ shall denote the number of unrestricted partitions of $n.$

\begin{theorem}\label{thm:ex1} Let $p(m,n)$ be the number of partitions of $n$ with $m$ parts, and put $p_2(n)=\sum_{m}m^2p(m,n).$ Let $N_2(n)$ be the second Atkin-Garvan moment [5]. Let $f(n,N)$ equal $p(n)$ minus the number of partitions of $n$ where parts are $\le N,$ and $g(n,N)$ the number of divisors $\le N$ of $n.$ Put $S(n)=\sum_{k,N\ge1}f(k,N)g(n-k,N).$ Then,

$$\sum_{n\ge1}S(n)q^n=\sum_{n\ge1}\left(\frac{1}{(q)_{\infty}}-\frac{1}{(q)_n}\right)\sum_{i\ge1}^{n}\frac{q^{i}}{1-q^i}=\sum_{n\ge1}(p_2(n)-np(n)-\frac{1}{2}N_2(n))q^n.$$
\end{theorem}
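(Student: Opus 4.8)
The proof divides into two essentially independent halves: the first equality is a formal generating-function identity, while the second is the substantive part, where the real work lies.

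\textbf{The first equality.} The plan is to read off each factor under the sum as a generating function. Since $1/(q)_{\infty}$ enumerates all partitions and $1/(q)_{n}=\prod_{j=1}^{n}(1-q^{j})^{-1}$ enumerates partitions into parts $\le n$, we have $\frac{1}{(q)_{\infty}}-\frac{1}{(q)_{n}}=\sum_{k\ge0}f(k,n)q^{k}$, with the convenient feature that $f(k,n)=0$ whenever $k\le n$. Expanding geometrically, $\sum_{i=1}^{n}\frac{q^{i}}{1-q^{i}}=\sum_{i=1}^{n}\sum_{r\ge1}q^{ir}=\sum_{m\ge1}g(m,n)q^{m}$, since the coefficient of $q^{m}$ counts precisely the divisors $i\le n$ of $m$. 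Forming the Cauchy product of these two series and summing over $n\ge1$ gives
$$\sum_{n\ge1}\Big(\tfrac{1}{(q)_{\infty}}-\tfrac{1}{(q)_{n}}\Big)\sum_{i=1}^{n}\frac{q^{i}}{1-q^{i}}=\sum_{l\ge1}\Big(\sum_{n\ge1}\sum_{k}f(k,n)\,g(l-k,n)\Big)q^{l}=\sum_{l\ge1}S(l)\,q^{l},$$
the interchange of the $n$- and $k$-sums being harmless because, for each fixed $l$, only finitely many pairs $(n,k)$ contribute (as $f(k,n)=0$ for $k\le n$).

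\textbf{Reducing the second equality.} Write $L=L(q)=\sum_{k\ge1}\frac{q^{k}}{1-q^{k}}$. Applying the operator $z\,\partial_{z}$ twice to the bivariate generating function $\prod_{k\ge1}(1-zq^{k})^{-1}=\sum_{m,n\ge0}p(m,n)z^{m}q^{n}$ and setting $z=1$ yields $\sum_{n}p_{2}(n)q^{n}=\frac{1}{(q)_{\infty}}\big(L^{2}+\sum_{k\ge1}\tfrac{q^{k}}{(1-q^{k})^{2}}\big)$; applying $q\,\partial_{q}$ to $1/(q)_{\infty}$ yields $\sum_{n}n\,p(n)q^{n}=\frac{1}{(q)_{\infty}}\sum_{k\ge1}\tfrac{kq^{k}}{1-q^{k}}$; and since $\sum_{k\ge1}\tfrac{q^{k}}{(1-q^{k})^{2}}=\sum_{m\ge1}\sigma_{1}(m)q^{m}=\sum_{k\ge1}\tfrac{kq^{k}}{1-q^{k}}$, subtraction collapses these into $\sum_{n}(p_{2}(n)-np(n))q^{n}=L^{2}/(q)_{\infty}$. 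On the other side, put $c_{n}=\frac{1}{(q)_{\infty}}-\frac{1}{(q)_{n}}$, $e_{i}=\frac{q^{i}}{1-q^{i}}$ and $h_{n}=\sum_{i=1}^{n}e_{i}$ (so $h_{0}=0$). From $\sum_{j\ge l}q^{j}/(q)_{j}=\frac{1}{(q)_{\infty}}-\frac{1}{(q)_{l-1}}=c_{l-1}$ and $\min(i,j)=\sum_{l\ge1}\mathbf{1}_{\{l\le i\}}\mathbf{1}_{\{l\le j\}}$ one gets $\sum_{i,j\ge1}\min(i,j)\,e_{i}\,\frac{q^{j}}{(q)_{j}}=\sum_{n\ge0}(L-h_{n})c_{n}$ (all rearrangements absolutely convergent for $|q|<1$); adding $\sum_{n\ge1}c_{n}h_{n}$ to this, using $h_{0}=0$, and invoking \eqref{eq:(1.1)} in the form $\sum_{n\ge0}c_{n}=L/(q)_{\infty}$ gives
$$\sum_{n\ge1}c_{n}h_{n}+\sum_{i,j\ge1}\min(i,j)\,e_{i}\,\frac{q^{j}}{(q)_{j}}=L\sum_{n\ge0}c_{n}=\frac{L^{2}}{(q)_{\infty}}.$$
Since $\sum_{n\ge1}c_{n}h_{n}$ is exactly the middle member of the theorem, the last two displays reduce the second equality to the single identity
$$\tfrac12\sum_{n\ge1}N_{2}(n)\,q^{n}=\sum_{i,j\ge1}\min(i,j)\,\frac{q^{i}}{1-q^{i}}\,\frac{q^{j}}{(q)_{j}}.$$

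\textbf{The crux, and the main obstacle.} For the left-hand side I would substitute a known closed form for $N_{2}$. The most convenient is Andrews' identity $\mathrm{spt}(n)=np(n)-\tfrac12N_{2}(n)$ together with the generating function $\sum_{n}\mathrm{spt}(n)q^{n}=\frac{1}{(q)_{\infty}}\sum_{n\ge1}\frac{(q)_{n-1}q^{n}}{1-q^{n}}$, which recasts the identity to be proved as
$$\sum_{i,j\ge1}\min(i,j)\,\frac{q^{i}}{1-q^{i}}\,\frac{q^{j}}{(q)_{j}}=\frac{1}{(q)_{\infty}}\sum_{n\ge1}\frac{q^{n}}{1-q^{n}}\big(n-(q)_{n-1}\big);$$
alternatively, the Atkin--Garvan formula $\sum_{n}N_{2}(n)q^{n}=\frac{2}{(q)_{\infty}}\sum_{n\ge1}\frac{(-1)^{n-1}(1+q^{n})q^{n(3n+1)/2}}{(1-q^{n})^{2}}$ (obtained, say, by applying $(z\,\partial_{z})^{2}$ at $z=1$ to Dyson's expansion $R(z,q)=\frac{1-z}{(q)_{\infty}}\sum_{n\in\mathbb{Z}}\frac{(-1)^{n}q^{n(3n+1)/2}}{1-zq^{n}}$ of the rank generating function, in which only $n\ne0$ survives and the $\pm n$ terms combine) would put pentagonal-number exponents on the right. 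Proving this identity is the heart of the matter: expanding $\min(i,j)=\sum_{l\ge1}\mathbf{1}_{\{l\le i\}}\mathbf{1}_{\{l\le j\}}$ and summing out the $j$-variable reduces the left side to a single sum over $l$ of a Lambert-type tail against the partial-theta quotient $1-(q)_{l-1}/(q)_{\infty}$ --- exactly the shape delivered by a Bailey-pair manipulation relative to $a=1$ (in keeping with the paper's keywords), the natural choice being the pair with $\beta_{n}=1/(q)_{n}$, whose companion $\alpha_{n}$ carries the pentagonal exponents $n(3n\pm1)/2$ seen on the right. Arranging for that Bailey-pair computation to reproduce the weights $q^{i}/(1-q^{i})$ exactly, and checking that the rearrangements used above are legitimate, is the principal obstacle; the rest of the argument is bookkeeping.
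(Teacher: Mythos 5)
Your first equality is fine, and your elementary reductions for the second are correct as far as they go: the telescoping $\sum_{j\ge l}q^j/(q)_j=1/(q)_\infty-1/(q)_{l-1}$, the decomposition of $\min(i,j)$, and the use of \eqref{eq:(1.1)} do legitimately reduce the theorem to the single identity
$$\tfrac12\sum_{n\ge1}N_2(n)q^n=\sum_{i,j\ge1}\min(i,j)\,\frac{q^i}{1-q^i}\,\frac{q^j}{(q)_j},$$
and that identity is in fact true (it checks numerically through $q^4$). But this is precisely the substantive content of the theorem, and you do not prove it: you recast it via $\mathrm{spt}(n)=np(n)-\tfrac12N_2(n)$ or via the Atkin--Garvan Lambert-series form of $\sum N_2(n)q^n$, and then only gesture at ``a Bailey-pair manipulation relative to $a=1$ with $\beta_n=1/(q)_n$,'' explicitly conceding that making the weights $q^i/(1-q^i)$ come out is ``the principal obstacle.'' That concession is the gap. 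A single classical (one-fold) Bailey pair is not obviously adequate here, because the needed structure is genuinely two-dimensional: the paper gets it from the Joshi--Vyas $2$-fold Bailey pair fed into the two-variable transform \eqref{eq:(2.2)}, differentiated in both $x$ and $z$, which produces the double sum $\sum_{n_1,n_2\ge1}(-1)^{n_1+n_2}q^{\binom{n_1+n_2+1}{2}}/\bigl((q)_{n_1+n_2}(1-q^{n_1})(1-q^{n_2})\bigr)$ equal to $L^2$ plus the pentagonal Lambert series that encodes $-\tfrac12(q)_\infty\sum N_2(n)q^n$; a differentiated form of Fine's identity \eqref{eq:(2.6)} then converts that double sum into $-\sum_{n\ge1}\bigl((q^{n+1})_\infty-1\bigr)\sum_{i=1}^{n}q^i/(1-q^i)$, i.e.\ (after dividing by $(q)_\infty$) the middle member. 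So the missing step in your plan is exactly the analytic identity the paper spends its proof on, and the route you sketch for it would still need either the two-fold machinery or some equally nontrivial substitute.

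That said, your framing is a genuinely different and cleaner decomposition of the problem: the paper goes from the Bailey-pair identity \emph{to} the sum-of-tails form and then to the moment statement, whereas you strip away everything elementary first and isolate a single self-contained identity equivalent to the theorem (essentially an $\mathrm{spt}$-type identity for $\min(i,j)$ weights). If you can prove that reduced identity --- for instance by relating $\sum_{i,j}\min(i,j)e_ie_j$ and the known generating function $\sum_n\mathrm{spt}(n)q^n=\sum_{n\ge1}q^n/\bigl((1-q^n)^2(q^{n+1})_\infty\bigr)$, or by redoing the paper's two-fold Bailey-pair computation --- the rest of your argument stands. As written, however, the proof is incomplete at its crux.
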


We mention that the appearance of the finite sum $\sum_{i\ge1}^{n}q^{i}(1-q^i)^{-1}$ suggests a connection with \eqref{eq:(1.1)} through the identity of Van Hamme [7]. The non-negativity of $S(n)$ implies our next result, which includes $M_2(n)$ [5] the second crank moment function.
\begin{theorem}\label{thm:ex2} For each non-negative integer $n,$ $M_2(n)+N_2(n)\le 2p_2(n).$ \end{theorem}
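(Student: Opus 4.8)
\textbf{Proof proposal for Theorem~\ref{thm:ex2}.}

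The plan is to read the inequality off from Theorem~\ref{thm:ex1} together with the non-negativity of the convolution $S(n)$ and the classical evaluation of the second crank moment. First I would observe that $S(n)\ge 0$ for every $n$: by definition $f(k,N)$ is a difference of partition counts that is manifestly non-negative and $g(n-k,N)$ is a count of divisors, so each summand $f(k,N)g(n-k,N)$ in $S(n)=\sum_{k,N\ge1}f(k,N)g(n-k,N)$ is a non-negative integer. Comparing the coefficient of $q^n$ in the first and last members of the displayed identity in Theorem~\ref{thm:ex1} then gives $p_2(n)-np(n)-\tfrac12 N_2(n)=S(n)\ge 0$, hence
$$N_2(n)\le 2p_2(n)-2np(n).$$

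Next I would bring in the crank side through the identity $M_2(n)=2np(n)$, valid for all $n\ge 0$. I would derive this by differentiating the crank generating function $C(z,q)=(q)_\infty/((zq)_\infty (q/z)_\infty)$ twice with respect to $z$ and setting $z=1$: since $M_1(n)=0$ by the symmetry $M(m,n)=M(-m,n)$, the logarithmic-derivative computation yields $\sum_{n\ge0}M_2(n)q^n=\frac{2}{(q)_\infty}\sum_{k\ge1}\frac{q^k}{(1-q^k)^2}$. I would then simplify $\sum_{k\ge1}\frac{q^k}{(1-q^k)^2}=\sum_{k\ge1}\sigma_1(k)q^k$ by collecting the monomial $q^{jk}$ with weight $j$ and using $\sum_{d\mid k}k/d=\sigma_1(k)$, and recall that $\sum_{n\ge1}np(n)q^n=\frac{1}{(q)_\infty}\sum_{k\ge1}\sigma_1(k)q^k$, which is just the logarithmic derivative of $1/(q)_\infty$. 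Equating the two expressions gives $M_2(n)=2np(n)$; this relation is in any case available in the Atkin--Garvan literature [5].

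Finally, substituting $2np(n)=M_2(n)$ into the displayed bound gives $N_2(n)\le 2p_2(n)-M_2(n)$, that is, $M_2(n)+N_2(n)\le 2p_2(n)$, which is the assertion. The only step carrying genuine content is the evaluation $M_2(n)=2np(n)$; everything else is an elementary positivity observation and a coefficient comparison with Theorem~\ref{thm:ex1}. I expect the main (and only minor) obstacle to be making the derivation of $M_2(n)=2np(n)$ fully self-contained — one has to be a little careful about the exceptional $n=1$ convention for the crank and about the rearrangement of the double sum in the divisor-sum manipulation — but none of this is deep, so I would likely just cite [5] for that identity and keep the write-up short.
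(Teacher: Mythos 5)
Your proposal is correct and follows essentially the same route as the paper: nonnegativity of $S(n)$ (read off from Theorem~\ref{thm:ex1}) gives $p_2(n)-np(n)-\tfrac12 N_2(n)\ge 0$, and the known evaluation $M_2(n)=2np(n)$ from [5] converts this into the stated inequality. Your sketch of deriving $M_2(n)=2np(n)$ from the crank generating function is a harmless elaboration of what the paper simply cites.
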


In general it is known through Andrew's [4] $spt(n)=\frac{1}{2}(M_2(n)-N_2(n)),$ that $M_2(n)>N_2(n).$ Hence a natural consequence of this theorem is that $N_2(n)\le p_2(n),$ since $2N_2(n)\le M_2(n)+N_2(n).$ 

To provide a further partition theorem connected to our Theorem \ref{thm:ex1} we will need the following definition.

{\bf Definition 1.} \it We define $P_{i,j}(n_1,n_2,n)$ to be the number of partitions of $n$ into parts $\le n_1+n_2$ wherein each part $\le n_1+n_2$ appears at least once as a part, and $n_1$ appears at least $(1+i)$ times and $n_2$ appears at least $(1+j)$ times. \rm

The generating function is 
\begin{equation}\frac{q}{1-q}\frac{q^2}{1-q^2}\cdots \frac{q^{(1+i)n_1}}{1-q^{n_1}}\cdots \frac{q^{(i+j)n_2}}{1-q^{n_2}}\cdots \frac{q^{n_1+n_2}}{1-q^{n_1+n_2}},\label{eq:(1.2)}\end{equation}
and so we see that $\sum_{i,j,n_1,n_2,n\ge1}(-1)^{n_1+n_2}P_{i,j}(n_1,n_2,n)q^n$ is our $q$-series on the left hand side of (2.5).

\begin{theorem}\label{thm:ex3} Let $D_o(n,N)$ (resp. $D_e(n,N)$) denote the number of partitions of $n$ into an odd (resp. even) number of distinct parts, each $\ge N+1.$ We have,
$$\sum_{i,j,n_1,n_2\ge1}(-1)^{n_1+n_2}P_{i,j}(n_1,n_2,n)=\sum_{k,N\ge1}(D_o(k,N)-D_e(k,N))g(n-k,N).$$
\end{theorem}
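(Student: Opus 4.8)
\emph{Proof idea.} The plan is to deduce Theorem~\ref{thm:ex3} from the $q$-series identity (2.5) by recognising the two factors on its right-hand side as generating functions for the two combinatorial quantities appearing in the statement. By (2.5) together with Theorem~\ref{thm:ex1}, the left-hand side of (2.5), namely $\sum_{n\ge1}\bigl(\sum_{i,j,n_1,n_2\ge1}(-1)^{n_1+n_2}P_{i,j}(n_1,n_2,n)\bigr)q^n$, equals
\[
(q)_\infty\sum_{n\ge1}S(n)q^n=(q)_\infty\sum_{n\ge1}\Bigl(\frac{1}{(q)_\infty}-\frac{1}{(q)_n}\Bigr)\sum_{i=1}^{n}\frac{q^i}{1-q^i}=\sum_{N\ge1}\Bigl(1-\frac{(q)_\infty}{(q)_N}\Bigr)\sum_{i=1}^{N}\frac{q^i}{1-q^i},
\]
using $(q)_\infty\bigl(\tfrac1{(q)_\infty}-\tfrac1{(q)_N}\bigr)=1-\tfrac{(q)_\infty}{(q)_N}$; if (2.5) is recorded in an equivalent but differently arranged form, one first brings it to this shape, which is routine.

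Next I would expand the two factors combinatorially. Since $\tfrac{(q)_\infty}{(q)_N}=(q^{N+1};q)_\infty=\sum_{S}(-1)^{|S|}q^{\sum_{s\in S}s}$ with $S$ ranging over finite subsets of $\{N+1,N+2,\dots\}$, the term $S=\emptyset$ contributes $1$ and is cancelled by the leading $1$, and sorting the remaining nonempty $S$ by the parity of $|S|$ gives $1-\tfrac{(q)_\infty}{(q)_N}=\sum_{k\ge1}\bigl(D_o(k,N)-D_e(k,N)\bigr)q^k$. Likewise $\sum_{i=1}^{N}\tfrac{q^i}{1-q^i}=\sum_{i=1}^{N}\sum_{t\ge1}q^{it}=\sum_{m\ge1}g(m,N)q^m$, since the coefficient of $q^m$ counts the $i\in\{1,\dots,N\}$ dividing $m$. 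Substituting both expansions into the displayed right-hand side, interchanging the (everywhere finite) inner summations, and reading off the coefficient of $q^n$ from the Cauchy product yields $\sum_{N\ge1}\sum_{k\ge1}\bigl(D_o(k,N)-D_e(k,N)\bigr)g(n-k,N)$, with $g(n-k,N)=0$ understood for $n-k\le0$; comparing this with the coefficient of $q^n$ on the left of (2.5) proves the theorem.

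The genuine obstacle is (2.5) itself, which I am allowed to assume: proving it is where the Bailey-pair machinery — and, as the remark after Theorem~\ref{thm:ex1} suggests, the identity of Van Hamme [7] — is spent, converting the infinite-product form~\eqref{eq:(1.2)} (summed over $i,j,n_1,n_2$ with the sign $(-1)^{n_1+n_2}$) into the closed $q$-series displayed above. Everything afterwards is bookkeeping; the only steps there needing a word of care are the cancellation of the empty-partition term and the rearrangement of the double series, both harmless since each coefficient of $q^n$ is a finite sum.
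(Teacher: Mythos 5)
Your proposal is correct and follows essentially the same route as the paper: the paper's own (very terse) proof of Theorem \ref{thm:ex3} likewise takes the identity, established inside the proof of Theorem \ref{thm:ex1}, that the left side of (2.5) equals $-\sum_{n\ge1}\left((q^{n+1})_{\infty}-1\right)\sum_{i=1}^{n}q^{i}/(1-q^{i})$ (which is exactly your $(q)_\infty\sum_n S(n)q^n$ rewriting), and then reads off coefficients via Definition 1, the expansion of $1-(q^{N+1})_{\infty}$ as $\sum_k(D_o(k,N)-D_e(k,N))q^k$, and the divisor generating function for $g$. You have merely spelled out the bookkeeping that the paper compresses into ``apply Definition 1.''
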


We mention this may be viewed as a two-dimensional analogue of the identity [6, 10]
$$\sum_{n\ge1}\frac{(-1)^{n}q^{\binom{n+1}{2}}}{(q)_{n}(1-q^{n})}=\sum_{n\ge0}\left((q^{n+1})_{\infty}-1\right),$$
which, in our notation is also $$\sum_{i,n_1,n\ge1}(-1)^{n_1}P_{i,0}(n_1,0,n)q^n.$$ \par The appearance of the square of the generating function for $d(n)$ creates a bit more of a challenge to work with than some similar identities related to $N_2(n),$ such as the smallest part identity [4]. We were, however, able to apply an interesting formula due to B. Kim [9, eq.(1.6)], $n\ge2,$
\begin{equation} \sum_{k}^{n-1}d(k)d(n-k)=\sigma_1(n)-\sigma_0(n)+2b(2,n),\label{eq:(1.3)}\end{equation}
where $b(m, n)$ denotes the number of partitions of $n$ into $m$ \it different \rm parts. Therefore, we have from Theorem \ref{thm:ex1} and \eqref{eq:(1.3)} that 
\begin{equation}S(n)=np(n)+\sum_{k}p(n-k)(2b(2,k)-d(k))-\frac{1}{2}N_2(n),\label{eq:(1.4)}\end{equation} once we apply the known [6] identity of Euler $np(n)=\sum_{k}\sigma_1(k)p(n-k).$ Hence $S(n)\equiv np(n)-\sum_{k}p(n-k)d(k)-\frac{1}{2}N_2(n)\pmod{2}.$ Another simple inequality may be obtained from (1.4) from the observation that $p(m,n)\ge b(m,n).$

\section{Proof of Theorems}

To prove our theorems we will use some familiar tools that have appeared in previous studies [1, 6, 8, 11]. 
\begin{proof}[Proof of Theorem \ref{thm:ex1}]
From [1], we have that for a $2$-fold Bailey pair $(\alpha_{n_1,n_2},\beta_{n_1,n_2})$ relative to $a_i,$ $i=1,2,$
		\begin{equation}\beta_{n_1, n_2}=\sum_{r_1\ge0}^{n_1}\sum_{r_2\ge0}^{n_2} \frac{\alpha_{r_1,r_2}}{(a_1q;q)_{n_1+r_1} (q;q)_{n_1-r_1}(a_2q;q)_{n_2+r_2} (q;q)_{n_2-r_2}}.\end{equation}
The needed general formula is given by
$$\sum_{n_1\ge0}^{\infty}\sum_{n_2\ge0}^{\infty}(x)_{n_1}(y)_{n_1}(z)_{n_2}(w)_{n_2}(a_1q/xy)^{n_1}(a_2q/zw)^{n_2}\beta_{n_1, n_2}$$
\begin{equation}=\frac{(a_1q/x)_{\infty}(a_1q/y)_{\infty}(a_2q/z)_{\infty}(a_2q/w)_{\infty}}{(a_1q)_{\infty}(a_1q/xy)_{\infty}(a_2q)_{\infty}(a_2q/zw)_{\infty}}\sum_{n_1\ge0}^{\infty}\sum_{n_2\ge0}^{\infty}\frac{(x)_{n_1}(y)_{n_1}(z)_{n_2}(w)_{n_2}(a_1q/xy)^{n_1}(a_2q/zw)^{n_2}\alpha_{n_1, n_2}}{(a_1q/x)_{n_1}(a_1q/y)_{n_1}(a_2q/z)_{n_2}(a_2q/w)_{n_2}}.\label{eq:(2.2)}\end{equation} \rm
Using the Joshi and Vyas [8] $2$-fold Bailey pair with relative to $a_j=1,$ $j=1,2,$ where $\alpha_{0, 0}=1,$

\begin{equation}\alpha_{n_1, n_2}=\begin{cases} (-1)^nq^{n(n-1)/2}(1+q^n),& \text {if } n_1=n_2=n,\\ 0, & \text{otherwise,} \end{cases}\end{equation}
and 

\begin{equation}\beta_{n_1, n_2}=\frac{q^{n_1n_2}}{(q)_{n_1}(q)_{n_2}(q)_{n_1+n_2}}.\end{equation}
 with \eqref{eq:(2.2)} and differentiating \eqref{eq:(2.2)} with respect to $x,$ setting $x=1,$ $y\rightarrow\infty,$ differentiating with respect to $z,$ setting $z=1,$ $w\rightarrow\infty,$
\begin{equation}\sum_{n_1,n_2\ge1}\frac{(-1)^{n_1+n_2}q^{\binom{n_1+n_2+1}{2}}}{(q)_{n_1+n_2}(1-q^{n_1})(1-q^{n_2})}=\left(\sum_{n\ge1}\frac{q^n}{1-q^n}\right)^2+\sum_{n\ge1}\frac{(-1)^n(1+q^n)q^{n(3n+1)/2}}{(1-q^n)^2}.\label{eq:(2.5)}\end{equation}
By Fine's identity [6, pg.13, eq.(12.2), $a=1,$ $t=q^m$], we have 
\begin{equation}\frac{1-q^m}{(q)_m}\sum_{n\ge0}\frac{(q)_n}{(bq)_n}q^{nm}=\sum_{n\ge0}\frac{(b)_n(-1)^nq^{n(n+1)/2+nm}}{(bq)_n(q)_{n+m}}.\label{eq:(2.6)}\end{equation} Differentiating with respect to $b$ and setting $b=1,$ dividing by $1-q^m$ and inverting the desired series we have that the left side of \eqref{eq:(2.5)} is
$$\sum_{n,m\ge1}\frac{(-1)^{m-1}q^{nm+m(m+1)/2}}{(q)_m}\sum_{i\ge1}^{n}\frac{q^i}{1-q^i}=-\sum_{n\ge1}\left((q^{n+1})_{\infty}-1\right)\sum_{i\ge1}^{n}\frac{q^{i}}{1-q^i}.$$ Inserting this into \eqref{eq:(2.5)} and dividing both sides by $(q)_{\infty}$ gives our main identity once we note that
$$\sum_{n\ge1}p_2(n)q^n=\frac{1}{(q)_{\infty}}\left(\sum_{n\ge1}\frac{q^n}{1-q^n}\right)^2+\sum_{n\ge1}np(n)q^n.$$

Now interpreting our $q$-series may be done as follows. $$\frac{1}{(q)_{\infty}}-\frac{1}{(q)_{N}}$$ is the generating function for $f(n,N),$ $p(n)$ minus the number of partitions of $n$ where parts are $\le N.$ The sum $\sum_{1\le i \le N}q^i(1-q^i)^{-1}$ is the generating function for $g(n,N),$ the number of divisors $\le N$ of $n.$ Hence we may write
$$\left(\frac{1}{(q)_{\infty}}-\frac{1}{(q)_{N}}\right)\sum_{1\le i \le N}q^i(1-q^i)^{-1}=\sum_{n\ge1}\left(\sum_{k}f(k,N)g(n-k,N)\right)q^n,$$ and therefore
$\sum_{k,N\ge1}f(k,N)g(n-k,N)$ is the coefficient of $q^n$ on the left side of Theorem 1.1. 
\end{proof}
\begin{proof}[Proof of Theorem \ref{thm:ex2}]
Since it is clear from the generating function that $S(n)\ge 0,$ we have $p_2(n)-np(n)-\frac{1}{2}N_2(n)\ge 0,$ and with $2np(n)=M_2(n)$ [5] we see the result follows. 
\end{proof}

\begin{proof}[Proof of Theorem \ref{thm:ex3}]
From Definition 1, we paraphrase the identity 
$$\sum_{n_1,n_2\ge1}\frac{(-1)^{n_1+n_2}q^{\binom{n_1+n_2+1}{2}}}{(q)_{n_1+n_2}(1-q^{n_1})(1-q^{n_2})}=-\sum_{n\ge1}\left((q^{n+1})_{\infty}-1\right)\sum_{i\ge1}^{n}\frac{q^{i}}{1-q^i},$$
and apply Definition 1.
\end{proof}

\section{Concluding remark}

Here we were able to obtain an interesting sum of tails identity without the use of the methods in [2, 3]. It would appear that one should be able to extend the ideas found in [3], particularly [3, Theorem 4.1, $t=0$], to obtain our Theorem \ref{thm:ex1}. It would also be of interest to see if $S(n)$ has similar divisibility properties as Andrews $spt(n)$ function [4]. Lastly, it would be of interest to see if Theorem \ref{thm:ex2} holds for $k$th moments as well.

1390 Bumps River Rd. \\*
Centerville, MA
02632 \\*
USA \\*
E-mail: alexpatk@hotmail.com
\end{document}